\theoremstyle{definition}
\newtheorem{thm}{Theorem}[section]
\newtheorem{thm*}{Theorem}
\newtheorem{defi}[thm]{Definition}
\newtheorem{defi*}{Definition}
\newtheorem{lem}[thm]{Lemma}
\newtheorem{lem*}{Lemma}
\newtheorem{pro*}{Proposition}
\newtheorem{rem}[thm]{Remark}
\newtheorem{claim*}{Claim}
\newcommand{\MC}[1]{\mathcal{#1}}
\newcommand{\MB}[1]{\mathbb{#1}}
\newcommand{\MF}[1]{\mathfrak{#1}}
\newcommand{\G}{\Gamma}
\newcommand{\D}{\Delta}
\newcommand{\sikaku}{\mathrel{\square}}
\title{Unification of graph products and \\ compatibility with switching}
\author{Sho Kubota}
\date{}
\begin{document}
\maketitle
\begin{abstract}
We define the type of graph products,
which enable us to treat many graph products in a unified manner.
These unified graph products are shown to be compatible with Godsil--McKay switching.
Furthermore,
by this compatibility,
we show that the Doob graphs can also be obtained
from the Hamming graphs by switching.
\vspace{10pt} \\ 
Keywords: graph product; switching; distance-regular graph;
Hamming graph; dual polar graph. \\
MSC Codes: 05B20; 05C50; 05C76; 05E30.
\end{abstract}

\section{Introduction}

After the twisted Grassmann graphs were introduced by Van Dam and Koolen \cite{vDK},
these graphs were studied by many researchers 
(for example \cite{BFK, BHW, FKT})
as the first family of non-vertex-transitive distance-regular graphs
with unbounded diameter.
These graphs were originally constructed by converting a part of lines of
a point-line incidence structure,
but recently,
Munemasa \cite{M} proved that 
the twisted Grassmann graphs are actually obtained 
by Godsil--McKay switching, too.

Similarly to the ordinary Grassmann graphs and the twisted Grassmann graphs,
there are many pairs of distance-regular graphs that have the same intersection array
but they are not isomorphic to each other.
Can we obtain aimed distance-regular graphs by switching
like the twisted Grassmann graphs?
Answering this question is one of the goals in this paper.
We show that the Doob graphs can be obtained
from the Hamming graphs by switching many times.
We use compatibility with switching and the Cartesian product.
Indeed,
we can find a partition for switching on the graph after taking product and
show the isomorphism between the graph taking product after switching
and the graph switched after taking product.

Actually,
this compatibility holds not only for the Cartesian product
but also many other graph products.
In Section~3,
we consider unified graph products which are written as 
the sum of tensor products of the identity matrix and
the adjacency matrices of the original graph and its complement.
These products enable us to treat many graph products in  a unified manner.
In Section~4,
we show that compatibility with switching holds on these products,
which is another main result in this paper.
Furthermore,
this compatibility suggests the possibility that
some other pair of distance-regular graphs that have the same intersection array
can be mapped to each other by switching.
If the dual polar graphs $B_d(q)$ and $C_d(q)$ can be done so,
then we simultaneously see that $D_d(q)$ and ${\rm Hem}_{d}(q)$
can also be mapped to each other by switching.

\section{Godsil--McKay switching}

Let $\G$ be a graph and
let $\pi = \{C_1, \dots, C_t \}$ be a partition of the vertex set $V(\G)$.
The {\it characteristic matrix} of $\pi$ is the $\{0,1\}$-matrix $S$
with rows indexed by $V(\G)$ and columns indexed by $\pi$,
where
\[
S_{x, C_i} =
\begin{cases} 1 \quad \text{if $x \in C_i$,} \\ 0 \quad \text{otherwise} \end{cases}.
\]
The partition $\pi$ is called an {\it equitable partition} if for all $i,j \in [t]$,
any two vertices in $C_i$ have the same number, say $r_{ij}$, of neighbors in $C_j$.
The matrix $R = (r_{ij})$ 
is called the {\it quotient matrix} of $\pi$.
As is well known,
if $\pi$ is an equitable partition and its quotient matrix is $R$,
then $A(\G)S = SR$ holds.
Here $A(\G)$ denotes the adjacency matrix of $\G$.
Conversely,
if there exists a matrix $R$ of size $t$ such that $A(\G)S = SR$,
then $\pi$ is an equitable partition and its quotient matrix is $R$.

The following tool for constructing cospectral graphs
was introduced by Godsil and McKay \cite{GM}.
Let $[t]$ denote the set $\{1,2, \dots, t\}$.

\begin{thm}\label{GMsw} {\it 
Let $\G$ be a graph and let $\pi = \{C_1, \dots, C_t, D \}$ be a partition of $V(\G)$.
Assume that $\pi$ satisfies the following two conditions: 
	\begin{enumerate}[(i)]
	\item $\{C_1, \dots, C_t \}$ is an equitable partition of $V(\G) \setminus D$.
	\item For every $x \in D$ and every $i \in [t]$,
the vertex $x$ has either $0, \frac{1}{2}|C_i|$ or $|C_i|$ neighbors in $C_i$.
	\end{enumerate}
Construct a new graph ${\rm sw}_{\pi} \G$ by interchanging adjacency and nonadjacency 
between $x \in D$ and the vertices in $C_i$ 
whenever $x$ has $\frac{1}{2}|C_i|$ neighbors in $C_i$.
Then $\G$ and ${\rm sw}_{\pi} \G$ have the same spectrum. }
\end{thm}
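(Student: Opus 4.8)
The plan is to realize the switching operation as conjugation of the adjacency matrix by a single symmetric orthogonal matrix, so that cospectrality becomes an immediate consequence of similarity. First I would attach to each cell $C_i$ the matrix $Q_i = \frac{2}{|C_i|}J_{|C_i|} - I_{|C_i|}$, where $J$ and $I$ denote the all-ones and identity matrices of the indicated size, and set $Q = \mathrm{diag}(Q_1, \dots, Q_t, I_D)$, the block acting on $D$ being the identity. A direct computation gives $Q_i^\top = Q_i$ and $Q_i^2 = I$ (using $J^2 = |C_i|\, J$), so $Q$ is symmetric and orthogonal, whence $Q^{-1} = Q^\top = Q$. Consequently $Q A(\G) Q$ is both symmetric and similar to $A(\G)$, so it shares the spectrum of $A(\G)$; the whole theorem then reduces to the single identity $Q A(\G) Q = A(\mathrm{sw}_\pi \G)$.

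To prove that identity I would write $A(\G)$ in block form along the partition $C_1, \dots, C_t, D$ and inspect $Q A(\G) Q$ block by block. For two cells $C_i, C_j$ with $i,j \in [t]$ the relevant block is $Q_i A_{ij} Q_j$, where $A_{ij}$ records the edges between $C_i$ and $C_j$. Here I would invoke condition (i): equitability means $A_{ij}\one = r_{ij}\one$ and $\one^\top A_{ij} = r_{ji}\one^\top$, together with the edge-count identity $r_{ij}|C_i| = r_{ji}|C_j|$. Expanding $Q_i A_{ij} Q_j$ produces one copy of $A_{ij}$ plus three multiples of the all-ones block coming from the $J$-terms; substituting the relations above shows the three coefficients cancel exactly, so $Q_i A_{ij} Q_j = A_{ij}$. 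Thus switching leaves all edges inside $V(\G)\setminus D$ untouched, and the block on $D \times D$ is fixed because the corresponding part of $Q$ is the identity.

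The decisive block is $Q_i B_i$, where $B_i$ is the $C_i$-by-$D$ incidence block; its column indexed by $x \in D$ is the characteristic vector $b_x$ of the neighbours of $x$ in $C_i$. Computing $Q_i b_x = \frac{2}{|C_i|}(\one^\top b_x)\one - b_x$ and splitting according to condition (ii) gives precisely the three cases of the construction: if $x$ has $0$ or $|C_i|$ neighbours in $C_i$ then $b_x = \zero$ or $b_x = \one$ and $Q_i b_x = b_x$, so nothing changes; if $x$ has $\frac{1}{2}|C_i|$ neighbours then $\one^\top b_x = \frac{1}{2}|C_i|$ and $Q_i b_x = \one - b_x$, which is exactly the complementation of adjacency and non-adjacency between $x$ and $C_i$. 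Hence the off-diagonal blocks of $Q A(\G) Q$ agree with the switched adjacencies, and by symmetry the transposed blocks $B_i^\top Q_i$ agree as well. This establishes $Q A(\G) Q = A(\mathrm{sw}_\pi \G)$ and completes the argument.

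I expect the main technical obstacle to be the cancellation in the equitable blocks: one has to use both the row- and column-constancy coming from equitability \emph{and} the global edge-count relation $r_{ij}|C_i| = r_{ji}|C_j|$ to make the $J$-terms vanish, and it is exactly here that condition (i) is genuinely needed. Conditions (ii) and the orthogonality of $Q$, by contrast, enter only through the transparent column-by-column check in the last paragraph.
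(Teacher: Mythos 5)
Your proposal is correct and is essentially the paper's own argument: the paper defines exactly the same switching matrix $Q$ (with $\frac{2}{|C_i|}J - I$ on each cell $C_i$ and the identity on $D$), notes $Q^2 = I$, and asserts $QA(\G)Q = A({\rm sw}_{\pi}\G)$, deducing cospectrality by similarity. Your block-by-block verification of that identity — the cancellation of the $J$-terms via equitability and the edge-count relation, and the column-wise check against condition (ii) — simply fills in the details the paper leaves as ``can be checked.''
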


This operation that transforms $\G$ into ${\rm sw}_{\pi} \G$
is called {\it Godsil--McKay switching}.
We call this partition $\pi$ used here a {\it Godsil--McKay partition}.
Also,
we call the special cell $D$ the {\it Godsil--McKay cell} of $\pi$.
In order to understand this switching more deeply,
we outline the proof of Theorem~\ref{GMsw}.

Define the matrix $Q$ indexed by $V(\G)$ as follows:
\[
Q_{xy} = \begin{cases}
\frac{2}{|C_i|} - \delta_{xy} \quad &\text{ if $x,y \in C_i$,} \\
\delta_{xy} &\text{ if $x,y \in D$,} \\
0 &\text{ otherwise. }
\end{cases} \]
Then $Q^2$ is the identity matrix and
$A({\rm sw}_{\pi} \G) = QA(\G)Q$ can be checked,
so $\G$ and ${\rm sw}_{\pi} \G$ are cospectral.

We call this matrix $Q$ used here the {\it switching matrix} with respect to $\pi$.
This matrix plays an important role in this paper.

%

Actually,
the two conditions (i) and (ii) in Theorem~\ref{GMsw}
can be written as algebraic conditions.
Let $\G$ be a graph and
$\pi = \{C_1,\dots, C_t, D\}$ be a partition of $V(\G)$.
We define
	\begin{align*}
A(\G)_{\MF{C}(\pi)} &=
A(\G)|_{(C_1 \cup \cdots \cup C_t) \times (C_1 \cup \cdots \cup C_t)}, \\
A(\G)_{\MF{D}(\pi)} &= A(\G)|_{D \times (C_1 \cup \cdots \cup C_t)},
	\end{align*}
but these are sometimes written as $A(\G)_{\MF{C}}$ and $A(\G)_{\MF{D}}$
for simplicity in the case where which partition we consider is clear.
Let $S$ be the characteristic matrix of $\pi \setminus \{D\}$.
Then $\pi$ is a Godsil--McKay partition with the Godsil--McKay cell $D$
if and only if the following two conditions hold:
	\begin{enumerate}[(i)]
	\item There exists a matrix $R$ of size $t$ such that $A(\G)_{\MF{C}}S = SR$,
	\item For any $i \in [t]$,
$(A(\G)_{\MF{D}} S)_{x, C_i} \in \{0, \frac12|C_i|, |C_i| \}$.
	\end{enumerate}

\section{Unification of graph products}

Such as the Cartesian product and the Kronecker product,
a number of graph products are known.
We give examples of such products in Table~\ref{T1}.
In this table,
$A$ and $B$ denote the adjacency matrices of original graphs.
As we see in the table,
the adjacency matrix of many graph products can be written
as the sum of tensor products of the identity matrix and
the adjacency matrices of the original graph and its complement.
We shall treat such products in a unified manner.

Let $\G$ and $\D$ be graphs.
Set $A_0 = I_{|V(\G)|}$, $A_1 = A(\G)$, $A_2 = J - I - A(\G)$,
$B_0 = I_{|V(\D)|}$, $B_1 = A(\D)$ and $B_2 = J - I - A(\D)$,
where $J$ is the all-one matrix and $I$ is the identity matrix.
And let $s_{ij} \in \{0,1\}$ for $i,j \in \{0,1,2\}$.
We consider the graph $\G \star \D$ defined by
\[ A(\G \star \D) = \sum_{i,j \in \{0,1,2\}} s_{ij} (A_i \otimes B_j). \]
Clearly,
this graph $\G \star \D$ is a simple graph
if and only if $s_{00} = 0$ holds.
We call the sequence $[0s_{01}s_{02}; s_{10 }s_{11} s_{12}; s_{20}s_{21}s_{22}]$
the {\it type} of the product.

\begin{table}[H]
\begin{center}
{\small 
\begin{tabular}{|c|c|c|c|} \hline
Name & Notation & Adjacency matrix & Type \\ \hline \hline
Cartesian product & $\sikaku$ & $(A \otimes I) + (I \otimes B)$ & $[010;100;000]$ \\ \hline
Kronecker product & $\otimes$ (or $\times$) & $A \otimes B$ & $[000;010;000]$ \\ \hline
strong product & $\boxtimes$ & $(A \otimes I) + (I \otimes B) + (A \otimes B)$ & $[010;110;000]$ \\ \hline
lexicographic product & $\circ$ & $(A \otimes J) + (I \otimes B)$ & $[010;111;000]$ \\ \hline
modular product & $\lozenge$ & 
\parbox[c]{135pt}{$(A \otimes I) + (I \otimes B) + (A \otimes B)$ \\ $+ (J-I-A) \otimes (J-I-B)$}
& $[010;110;001]$ \\ \hline
weak modular product & $\nabla$ & $(A \otimes B) + (J-I-A) \otimes (J-I-B)$ & $[000;010;001]$ \\ \hline
OR product & $\lor$ & $(A \otimes J) + (J \otimes B) - (A \otimes B)$ & $[010;111;010]$ \\ \hline
\end{tabular} 
}
\caption{Examples of graph products} \label{T1}
\end{center}
\end{table}

Note that our unification of product is a generalization of the concept of NEPS \cite{CDS}.
However, where NEPS only unifies about $2^3$ products,
our new concept unifies about $2^8$ products.


\begin{lem}\label{Lem1} {\it 
Let $\G$ be a graph with a partition $\pi = \{C_1, \dots, C_t\}$ of $V(\G)$ and
let $\D$ be a graph.
Set $C_i^{(w)} = C_i \times \{w\}$ for $w \in V(\D)$ and 
$\Pi = \{ C_i^{(w)} \mid i \in [t], w \in V(\D) \}$.
If $\pi$ is an equitable partition of $\G$,
then $\Pi$ is also an equitable partition of the product $\G \star \D$ of any type.}
\end{lem}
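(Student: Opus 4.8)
The plan is to use the algebraic reformulation of ``equitable'' given just before Theorem~\ref{GMsw}: a partition is equitable precisely when its characteristic matrix $S$ satisfies $A S = S R$ for some matrix $R$. So I would first fix the characteristic matrix $S$ of $\pi$ and, since $\pi$ is equitable, a quotient matrix $R$ with $A(\G) S = S R$. Next I would identify the characteristic matrix of $\Pi$. Ordering the vertices of $\G \star \D$ so that the adjacency matrix is $\sum_{i,j} s_{ij}(A_i \otimes B_j)$ (i.e.\ $V(\G)$ indexing the outer factor), the cell $C_i^{(w)} = C_i \times \{w\}$ contributes exactly the column $S_{\cdot, i} \otimes e_w$, so the characteristic matrix of $\Pi$ is $\tilde S = S \otimes I_{|V(\D)|}$. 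The whole statement then reduces to producing a single matrix $\tilde R$ with $A(\G \star \D)\, \tilde S = \tilde S\, \tilde R$.

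The heart of the argument is the observation that each of the three ``$\G$-factors'' $A_0, A_1, A_2$ sends $S$ back into the column space of $S$. Indeed $A_0 S = S$ (take $R_0 = I$), and $A_1 S = A(\G) S = S R$ (take $R_1 = R$). For the complement factor $A_2 = J - I - A(\G)$ I would use that $\pi$ is a partition of all of $V(\G)$: each column $S_{\cdot, i}$ is the indicator of $C_i$, so $J S_{\cdot, i} = |C_i|\, \one$, and since $\one = S \one_t$ this gives $J S = S N$ where $N$ is the $t \times t$ matrix with $N_{ki} = |C_i|$. Hence $A_2 S = S(N - I - R)$, so we may take $R_2 = N - I - R$. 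Thus $A_i S = S R_i$ for every $i \in \{0,1,2\}$.

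Finally I would assemble these using bilinearity of the Kronecker product and the mixed-product rule. Writing $\tilde S = S \otimes I$,
\begin{align*}
A(\G \star \D)\, \tilde S
&= \sum_{i,j} s_{ij}(A_i \otimes B_j)(S \otimes I)
= \sum_{i,j} s_{ij}\big((A_i S) \otimes B_j\big) \\
&= \sum_{i,j} s_{ij}\big((S R_i) \otimes B_j\big)
= \tilde S \Big(\sum_{i,j} s_{ij}(R_i \otimes B_j)\Big),
\end{align*}
so $\tilde R = \sum_{i,j} s_{ij}(R_i \otimes B_j)$ is a quotient matrix for $\Pi$, and $\Pi$ is equitable. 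Notably $\tilde R$ depends on the type only through this linear combination, which is exactly why the conclusion holds for a product of \emph{any} type. I expect the only genuinely nonformal point to be the complement term $A_2 S = S R_2$: it is where one actually uses that the cells of $\pi$ have constant size as seen from the whole vertex set (the identity $J S = S N$), whereas the $A_0$ and $A_1$ terms are immediate. Everything else is a routine manipulation of tensor products.
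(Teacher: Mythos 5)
Your proposal is correct and follows essentially the same route as the paper: identify the characteristic matrix of $\Pi$ as $S \otimes I_{|V(\D)|}$, establish $A_iS = SR_i$ for $i \in \{0,1,2\}$ (your matrix $N$ is exactly the paper's matrix $M$ with $M_{ij} = |C_j|$), and assemble via the mixed-product rule to exhibit the quotient matrix $\sum_{i,j} s_{ij}(R_i \otimes B_j)$. Your explicit justification of $JS = SN$ via $\one = S\one_t$ is slightly more detailed than the paper, which states the identity $A_2S = S(M - I - R)$ without elaboration, but the argument is the same.
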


\begin{proof}
Let $S$ be the characteristic matrix with respect to $\pi$.
First, we remark that $S \otimes I_{|V(\D)|}$ is the characteristic matrix
with respect to $\Pi$.
Indeed,
	\begin{align*}
(x,w) \in C_i^{(w')} &\iff x \in C_i \text{ and } w = w' \\
&\iff S_{x, C_i} \cdot I_{ww'} = 1 \\
&\iff (S \otimes I)_{(x,w), (C_i, w')} = 1.
	\end{align*}
Since $\pi$ is an equitable partition,
there exists a matrix $R$ of size $t$ such that $A(\G)S = SR$.
Then, $A_0S = S$, $A_1S = SR$ and
$A_2 S = (J - I - A_1)S = S(M - I - R)$,
where $M$ is the matrix of size $t$ with $M_{ij} = |C_j|$,
so we have
	\begin{align*}
A(\G \star \D)(S \otimes I)
&= \sum_{i,j \in \{0,1,2\}} s_{ij} (A_i \otimes B_j) (S \otimes I) \\
&= \sum_{i,j \in \{0,1,2\}} s_{ij} (A_i S \otimes B_j) \\
&= \sum_{j=0}^2 \bigl( s_{0j}(S \otimes B_j) + s_{1j}(SR \otimes B_j)
+ s_{2j}(S(M-I-R) \otimes B_j) \bigr) \\
&= (S \otimes I) \sum_{j=0}^2 \bigl( s_{0j}(I \otimes B_j) + s_{1j}(R \otimes B_j)
+ s_{2j}((M-I-R) \otimes B_j) \bigr).
	\end{align*}
Therefore,
$\Pi$ is an equitable partition of the product $\G \star \D$.
\end{proof}

\section{Compatibility with switching}

Let $\MC{M}$ be the set of adjacency matrices of all finite simple graphs and
$\ast$ be a binary operation on $\MC{M}$
such that the size of $M \ast N$ is equal to the product
of the size of $M$ and $N$, for $M,N \in \MC{M}$.
Then,
we can consider a product graph $\G \ast \D$
to be the graph having the adjacency matrix $A(\G) \ast A(\D)$.
When $\G$ has a Godsil--McKay partition $\pi$,
$({\rm sw}_{\pi} \G) \ast \D$ can be defined.
Then, can we find some partition $\Pi$ for switching on the graph $\G \ast \D$?
If so,
is the graph $({\rm sw}_{\pi} \G) \ast \D$ taking product after switching
isomorphic to the graph ${\rm sw}_{\Pi} (\G \ast \D)$ switched after taking product?
In this section,
we answer these questions.
For the product $\star$ of any type considered in Section~3,
we show that there exists a Godsil--McKay partition $\Pi$ and
that the isomorphism 
$({\rm sw}_{\pi} \G) \star \D \simeq {\rm sw}_{\Pi} (\G \star \D)$ holds.

\begin{thm} \label{MT1}{\it
Let $\G$ be a graph with a Godsil--McKay partition $\pi = \{ C_1, \dots, C_t, D \}$.
Let $\D$ be a graph. Set $C_i^{(w)} = C_i \times \{w\}$ for $w \in V(\D)$,
$\MC{D} = D \times V(\D)$ and
$\Pi = \{ C_i^{(w)} \mid i \in [t], w \in V(\D) \} \sqcup \{\MC{D}\}$.
Then, $\Pi$ is a Godsil--McKay partition with the Godsil--McKay cell $\MC{D}$
on $\G \star \D$ for the product of any type.
Moreover,
$({\rm sw}_{\pi} \G) \star \D$ is isomorphic to ${\rm sw}_{\Pi} (\G \star \D)$.
}
\end{thm}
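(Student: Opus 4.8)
The plan is to prove the two assertions in turn: first that $\Pi$ satisfies the two algebraic conditions that characterize a Godsil--McKay partition with cell $\MC{D}$, and then that the switching matrix of $\Pi$ is exactly $Q \otimes I$, where $Q$ is the switching matrix of $\pi$, so that the isomorphism falls out of a short conjugation computation.

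For condition (i), I would note that $V(\G \star \D) \setminus \MC{D} = (C_1 \cup \cdots \cup C_t) \times V(\D)$, and that the subgraph of $\G \star \D$ induced on this set is exactly $\G' \star \D$, where $\G'$ is the subgraph of $\G$ induced on $C_1 \cup \cdots \cup C_t$. Indeed, restricting $A_0, A_1, A_2$ to rows and columns in $C_1 \cup \cdots \cup C_t$ gives $I$, $A(\G')$ and $J - I - A(\G')$, which are the three matrices attached to $\G'$. Since $\pi$ is a Godsil--McKay partition, $\{C_1, \dots, C_t\}$ is an equitable partition of $\G'$, so Lemma~\ref{Lem1} applied to $\G'$ and $\D$ shows that $\{C_i^{(w)}\}$ is an equitable partition of $\G' \star \D$, which is precisely condition (i).

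For condition (ii), I would count the neighbors of a vertex $(x,v) \in \MC{D}$ (so $x \in D$, $v \in V(\D)$) inside a cell $C_i^{(w)}$. Expanding $\bigl(A(\G \star \D)\bigr)_{(x,v),(y,w)} = \sum_{i',j'} s_{i'j'} (A_{i'})_{xy} (B_{j'})_{vw}$ and summing over $y \in C_i$, the $i' = 0$ terms vanish since $x \notin C_i$. The decisive observation is that, for fixed $v$ and $w$, exactly one of $(B_0)_{vw}, (B_1)_{vw}, (B_2)_{vw}$ equals $1$ (according to whether $v = w$, $v \sim w$, or $v \neq w$ with $v \not\sim w$); call its index $j^*$. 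The count therefore collapses to $s_{1j^*} n + s_{2j^*}(|C_i| - n)$, where $n$ is the number of neighbors of $x$ in $C_i$ inside $\G$, and $n \in \{0, \tfrac12|C_i|, |C_i|\}$ because $\pi$ is a Godsil--McKay partition. A direct case check finishes this: $n = 0$ and $n = |C_i|$ give a value in $\{0, |C_i|\}$, while $n = \tfrac12|C_i|$ gives $(s_{1j^*} + s_{2j^*})\tfrac12|C_i| \in \{0, \tfrac12|C_i|, |C_i|\}$; as $|C_i^{(w)}| = |C_i|$, the count always lies in $\{0, \tfrac12|C_i^{(w)}|, |C_i^{(w)}|\}$, establishing (ii). Hence $\Pi$ is a Godsil--McKay partition with cell $\MC{D}$.

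For the isomorphism, let $Q$ be the switching matrix of $\pi$, so $A({\rm sw}_\pi \G) = Q A(\G) Q$ and $Q^2 = I$. I would first check that $Q \one = \one$ (each row of $Q$ sums to $1$), whence $QJQ = J$; together with $QIQ = I$ this gives $Q A_i Q = A_i'$ for $i \in \{0,1,2\}$, where $A_0' = I$, $A_1' = A({\rm sw}_\pi \G)$, $A_2' = J - I - A({\rm sw}_\pi \G)$ are the matrices attached to ${\rm sw}_\pi \G$. Next, straight from the definition of the switching matrix one verifies that the switching matrix $Q_\Pi$ of $\Pi$ equals $Q \otimes I_{|V(\D)|}$: on each block $C_i^{(w)} \times C_i^{(w)}$ it is $\tfrac{2}{|C_i|}J - I$, on $\MC{D} \times \MC{D}$ it is the identity, and all other blocks vanish, matching $Q \otimes I$ entrywise. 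Then, by the mixed-product rule for tensor products,
\begin{align*}
A\bigl(({\rm sw}_\pi \G) \star \D\bigr)
&= \sum_{i,j} s_{ij} (A_i' \otimes B_j)
= \sum_{i,j} s_{ij}(Q A_i Q \otimes B_j) \\
&= (Q \otimes I) A(\G \star \D)(Q \otimes I)
= Q_\Pi A(\G \star \D) Q_\Pi,
\end{align*}
which equals $A\bigl({\rm sw}_\Pi(\G \star \D)\bigr)$. The two adjacency matrices thus coincide, and the identity map is the desired isomorphism. I expect the main obstacles to be the bookkeeping in condition (ii)---especially recognizing that the $B_{j'}$ select a single surviving term---together with the two conjugation facts $Q A_2 Q = A_2'$ (which needs $QJQ = J$) and $Q_\Pi = Q \otimes I$.
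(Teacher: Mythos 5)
Your proposal is correct and follows essentially the same route as the paper's own proof: condition (i) via Lemma~\ref{Lem1} applied to the induced subgraph on $C_1 \cup \cdots \cup C_t$, condition (ii) via the observation that exactly one $B_{j^*}$ survives so the count collapses to $s_{1j^*}n + s_{2j^*}(|C_i|-n)$ with the same case analysis, and the isomorphism via the identity $\tilde{Q} = Q \otimes I$ and conjugation. The only difference is cosmetic (you phrase (ii) as a direct neighbor count where the paper works with the characteristic matrix $S \otimes I$, and you supply slightly more detail for $QJQ = J$ and $\tilde{Q} = Q \otimes I$, which the paper asserts as clear).
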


\begin{proof}
First,
we prove that $\Pi$ is a Godsil--McKay partition on $\G \star \D$.
Since $\pi$ is a Godsil--McKay partition with the Godsil--McKay cell $D$,
$\pi \setminus \{ D \}$ is an equitable partition of the graph $\G \setminus D$.
By Lemma~\ref{Lem1},
$\Pi \setminus \{ \MC{D} \}$ is an equitable partition
of the graph $(\G \setminus D) \star \D$.
Also, $(\G \setminus D) \star \D$ is nothing but $(\G \star \D) \setminus \MC{D}$,
so $\Pi \setminus \{ \MC{D} \}$ is an equitable partition of
$(\G \star \D) \setminus \MC{D}$.
Let $S$ denote the characteristic matrix with respect to $\pi \setminus \{D\}$.
Then $S \otimes I_{|V(\D)|}$ is the characteristic matrix
with respect to $\Pi \setminus \{ \MC{D} \}$.
Since $(A_0)_{\MF{D}(\pi)}$ is the zero matrix,
we have
\[ A(\G \star \D)_{\MF{D}(\Pi)}
= \sum_{i = 1}^{2} \sum_{j = 0}^{2} s_{ij} ((A_i)_{\MF{D}(\pi)} \otimes B_j) \]
and
	\begin{align}
(A(\G \star \D)_{\MF{D}(\Pi)} (S \otimes I))_{(x,w), C_l^{(w')}}
&= \sum_{i = 1}^{2} \sum_{j = 0}^{2} s_{ij} ((A_i)_{\MF{D}(\pi)} S \otimes B_j)_{(x,w), C_l^{(w')}} \notag \\
&= \sum_{i = 1}^{2} \sum_{j = 0}^{2} s_{ij} ((A_i)_{\MF{D}(\pi)} S)_{xC_l}(B_j)_{ww'}. \label{Exp2}
	\end{align}
There exists unique $j \in \{0,1,2\}$ such that
$(B_j)_{ww'} = 1$ and $(B_k)_{ww'} = 0$ for $k \in \{0,1,2\} \setminus \{j\}$, so
Expression~(\ref{Exp2}) is equal to 
$s_{1j}((A_1)_{\MF{D}(\pi)} S)_{x, C_l} + s_{2j}((A_2)_{\MF{D}(\pi)} S)_{x, C_l}$.
And $((A_2)_{\MF{D}(\pi)} S)_{x, C_l} = ((J - (A_1)_{\MF{D}(\pi)})S)_{x, C_l}
= |C_l| - ((A_1)_{\MF{D}(\pi)} S)_{x, C_l}$.
Since $D$ is a Godsil--McKay cell,
$((A_1)_{\MF{D}(\pi)} S)_{x, C_l} \in \{0 ,\frac12|C_l|, |C_l| \}$.
Thus, we have
	\begin{equation} \label{Eq1}
(A(\G \star \D)_{\MF{D}(\Pi)} (S \otimes I))_{(x,w), C_l^{(w')}} =
\begin{cases}
s_{2j} |C_l| \quad &\text{if $((A_1)_{\MF{D}(\pi)} S)_{x, C_l} = 0$,} \\
\frac{s_{1j} + s_{2j} }{2}|C_l| \quad &\text{if $((A_1)_{\MF{D}(\pi)} S)_{x, C_l} = \frac12|C_l|$,} \\
s_{1j} |C_l| \quad &\text{if $((A_1)_{\MF{D}(\pi)} S)_{x, C_l} = |C_l|$.}
\end{cases}
	\end{equation}
And $s_{ij} \in \{0,1\}$ guarantees that this value is
$0$, $\frac12|C_l^{(w')}|$ or $|C_l^{(w')}|$ for any case,
which is nothing but the condition to show,
that is, $\MC{D}$ is a Godsil--McKay cell.

Next, we show the isomorphism
$({\rm sw}_{\pi} \G) \star \D \simeq {\rm sw}_{\Pi} (\G \star \D)$.
Let $Q$ and $\tilde{Q}$ be the switching matrices
with respect to the Godsil--McKay partitions $\pi$ and $\Pi$, respectively.
Clearly,
$\tilde{Q} = Q \otimes I$ holds.
Set $A'_0 = I$, $A'_1 = A({\rm sw}_{\pi}\G)$ and $A'_2 = J - I - A({\rm sw}_{\pi}\G)$.
Since $QI_{|V(\G)|}Q = I_{|V(\G)|}$ and $QJ_{|V(\G)|}Q = J_{|V(\G)|}$,
we have $A'_i = QA_iQ$ for $i \in \{0,1,2\}$.
These imply
\begin{align*}
A({\rm sw}_{\Pi}(\G \star \D))
&= \tilde{Q} A(\G \star \D) \tilde{Q} \\
&= (Q \otimes I) A(\G \star \D) (Q \otimes I) \\
&= A(({\rm sw}_{\pi} \G) \star \D),
\end{align*}
so the isomorphism holds.
\end{proof}



By the above theorem,
we see that any graph product in Table~\ref{T1} satisfies compatibility
with Godsil--McKay switching.
Moreover,
notice that the bipartite double and the extended bipartite double
can be described as the Kronecker product
and the product of type $[000; 110; 000]$ of $\G$ and $K_2$,
respectively.
Thus,
the bipartite double of switched $\G$ is isomorphic to
the switched bipartite double of $\G$.
The same is true of the extended bipartite double.

There is one more remark.
Switching results in isomorphic graphs for products of some types,
that is, $\G \star \D = {\rm sw}_{\Pi}(\G \star \D)$ could hold.
Indeed,
considering the clique extension and the coclique extension as examples,
which are nothing but the products of type $[010; 110; 110]$ and $[010; 010; 010]$,
respectively,
\[ (A(\G \star \D)_{\MF{D}} (S \otimes I))_{(x,w), C_j^{(w')}} \in \{ 0, |C_j^{(w')}| \} \]
holds for any $(x,w) \in \MC{D}$ and for any $C_j^{(w')}$ by (\ref{Eq1}).
This means that switching does not produce a non-isomorphic graph.
Of cause,
if we take a different switching partition,
switching could produce a non-isomorphic graph.
Indeed, Abiad--Brouwer--Haemers \cite{ABH} give a switching partition
different form $\Pi$ to produce a non-isomorphic graph
as for the coclique extension.
On the other hand,
some graph products satisfy cancellation, that is,
$\G \ast \D \simeq \G' \ast \D$ implies $\G \simeq \G'$
except $\D$ is the empty graph.
(See Section~6 and 9 in \cite{HIK} for example.
The Cartesian product and the strong product satisfy cancellation in general
and the Kronecker product also satisfies in many cases.)
For these products,
$\G \star \D \not\simeq {\rm sw}_{\Pi}(\G \star \D)$ holds
if $\G \not\simeq {\rm sw}_{\pi}\G$.

\section{Distance-regular graphs obtained by Godsil--McKay switching}

As an application of Theorem~\ref{MT1},
we prove that the Doob graphs can be obtained from the Hamming graphs
by switching many times.
Not only so,
compatibility suggests the possibility that
some other pair of distance-regular graphs that have the same intersection array
can be mapped to each other by switching.

\subsection{The Hamming graphs and the Doob graphs}

The {\it Hamming graph},
denoted by $H(d,q)$,
is the Cartesian product of $d$ cliques of size $q$:
\[ H(d,q) = \underbrace{K_q \sikaku K_q \sikaku \cdots \sikaku K_q}_{d}, \]
which is known as one of examples of distance-regular graphs.
In the case $(d,q) = (2,4)$,
this graph behaves interestingly.
Writing the vertex set as $V(H(2,4)) = [4] \times [4]$ and
setting $C = \{(x,x) \mid x \in [4]\}$,
the partition $\pi = \{C, V(H(2,4)) \setminus C \}$ of $V(H(2,4))$
is a Godsil--McKay partition with the Godsil--McKay cell $V(H(2,4)) \setminus C$.
Moreover,
the switched graph ${\rm sw}_{\pi} H(2,4)$ is not isomorphic to the original graph.
This graph is known as the {\it Shrikhande graph},
denoted by $Sh$.
The {\it Doob graph}, denoted by $D(m,n)$,
is the Cartesian product of $m$ Shrikhande graphs and $n$ cliques of size $4$:
\[ D(m,n) = \underbrace{Sh \sikaku \cdots \sikaku Sh}_{m}
\sikaku \underbrace{K_4 \sikaku \cdots \sikaku K_4}_{n} \]
This graph has the same intersection array (so the same spectrum)
as the Hamming graph $H(2m+n,4)$,
but they are not isomorphic to each other
for $m \geq 1$.

Before the next theorem,
we remark that the Cartesian product satisfies commutativity and associativity
(see Section~4.2 in \cite{HIK} or we can check directly).

\begin{thm}{\it
Let $m \geq 1$ and $n \geq 0$.
The Doob graph $D(m,n)$ can be obtained from the Hamming graph $H(2m+n,4)$
by switching $m$ times.
}
\end{thm}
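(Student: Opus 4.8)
The plan is to induct on $m$, peeling off a single copy of $H(2,4)$ from the Hamming graph at each stage, switching it into the Shrikhande graph $Sh$ by the basic fact $Sh = {\rm sw}_{\pi}H(2,4)$ recalled above, and propagating this switch through the remaining Cartesian factors by means of the compatibility Theorem~\ref{MT1}. The only structural facts I need beyond Theorem~\ref{MT1} are that $H(2m+n,4)=H(2,4)\sikaku H(2(m-1)+n,4)$ and $D(m,n)=Sh\sikaku D(m-1,n)$, both of which follow from $H(2,4)=K_4\sikaku K_4$ together with the commutativity and associativity of $\sikaku$ noted just before the theorem.

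First I would record the iterated form of the compatibility result. Theorem~\ref{MT1} asserts that if $\G$ carries a Godsil--McKay partition $\pi$, then $\G\sikaku\D$ carries an explicit Godsil--McKay partition $\Pi$ with ${\rm sw}_{\Pi}(\G\sikaku\D)\simeq({\rm sw}_{\pi}\G)\sikaku\D$. Applying this once produces a graph isomorphic to $({\rm sw}_{\pi}\G)\sikaku\D$; transporting the next Godsil--McKay partition of ${\rm sw}_{\pi}\G$ along this isomorphism and invoking Theorem~\ref{MT1} again, a routine induction shows that whenever $\G$ can be transformed into $\G'$ by a sequence of $k$ Godsil--McKay switches, the product $\G\sikaku\D$ can be transformed into $\G'\sikaku\D$ by a sequence of $k$ Godsil--McKay switches. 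This lifting statement is the engine of the proof.

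Now I would carry out the induction on $m$. For the base case $m=1$, write $H(2+n,4)=H(2,4)\sikaku H(n,4)$ and apply Theorem~\ref{MT1} with $\G=H(2,4)$ (equipped with the partition $\pi$ whose switch is $Sh$) and $\D=H(n,4)$; a single switch yields a graph isomorphic to $Sh\sikaku H(n,4)=D(1,n)$. For the inductive step, assume that for every $n'\ge 0$ the graph $H(2(m-1)+n',4)$ can be switched into $D(m-1,n')$ in $m-1$ steps. Writing $H(2m+n,4)=H(2,4)\sikaku H(2(m-1)+n,4)$, one switch via Theorem~\ref{MT1} turns $H(2m+n,4)$ into a graph isomorphic to $Sh\sikaku H(2(m-1)+n,4)$. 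Using commutativity to regard this as $H(2(m-1)+n,4)\sikaku Sh$ and applying the lifting statement above (with $\D=Sh$) to the inductive hypothesis, the $m-1$ switches carrying $H(2(m-1)+n,4)$ to $D(m-1,n)$ lift to $m-1$ switches carrying $Sh\sikaku H(2(m-1)+n,4)$ to $Sh\sikaku D(m-1,n)=D(m,n)$. Together with the first switch this exhibits $D(m,n)$ after $1+(m-1)=m$ switches, completing the induction.

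The genuinely new content is entirely contained in Theorem~\ref{MT1}; the point requiring care is the bookkeeping behind the lifting statement, namely that each Godsil--McKay partition arising after a switch transports across the isomorphism furnished by Theorem~\ref{MT1}, so that successive switches may legitimately be composed, and that throughout ``obtained by switching'' is read up to isomorphism. One should also note the convention $H(0,4)=K_1$ so that the factor $H(2(m-1)+n,4)$ and the case $n=0$ make sense, but this is immediate and requires no further computation.
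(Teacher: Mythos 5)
Your proposal is correct and follows essentially the same route as the paper: induction on $m$, with Theorem~\ref{MT1} supplying the one-switch compatibility and commutativity/associativity of $\sikaku$ handling the factor bookkeeping. The only organizational difference is that you recurse on $D(m-1,n)$ and lift the inductive $m-1$ switches over the $Sh$ factor via an explicit iterated-compatibility lemma, whereas the paper recurses on $D(m-1,n+2)$ and writes the conclusion as nested switches ${\rm sw}({\rm sw}\cdots{\rm sw}\,H(2m+n,4))$, leaving implicit the same transport-of-partitions-along-isomorphisms bookkeeping that you spell out.
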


\begin{proof}
We prove by induction on $m$.
When $m = 1$,
compatibility implies
$D(1,n) = ({\rm sw} H(2,4)) \sikaku H(n,4) \simeq {\rm sw} (H(2,4) \sikaku H(n,4))
= {\rm sw} H(n+2,4)$.
Next,
we suppose $m > 1$.
Then we have
	\begin{align*}
D(m,n) &= \underbrace{Sh \sikaku \cdots \sikaku Sh}_{m} \sikaku H(n,4) \\
&= {\rm sw} H(2,4) \sikaku (\underbrace{Sh \sikaku \cdots \sikaku Sh}_{m-1} \sikaku H(n,4)) \tag{by associativity} \\
&\simeq {\rm sw} \left( H(2,4) \sikaku ( \underbrace{Sh \sikaku \cdots \sikaku Sh}_{m-1} \sikaku H(n,4) ) \right) \tag{by Theorem~\ref{MT1}} \\
&\simeq {\rm sw} D(m-1, n+2) \tag{by commutativity} \\
&\simeq {\rm sw} \left( \underbrace{{\rm sw} \cdots {\rm sw}}_{m-1} H(2(m-1)+(n+2),4)  \right) \tag{by induction} \\
&= \underbrace{{\rm sw} \cdots {\rm sw}}_{m} H(2m+n,4),
	\end{align*}
which is what we want to show.
\end{proof}

\subsection{Dual polar graphs and their extended bipartite double}

For the materials in this subsection,
we refer the reader to \cite{BCN} for details.
Let $V$ be one of the following spaces equipped with a specified form $f$:
\[
\begin{array}{|c|c|c|} \hline
\text{Name} &V & f \\ \hline \hline
[B_d(q)] & \MB{F}_q^{2d+1} & \text{a nondegenerate orthogonal form} \\ \hline
[C_d(q)] & \MB{F}_q^{2d} & \text{a nondegenerate symplectic form} \\ \hline
[D_d(q)] & \MB{F}_q^{2d} & \text{a nondegenerate orthogonal form of Witt index $d$} \\ \hline
\end{array}
\]
A subspace $U$ of $V$ is called {\it totally isotropic}
if $U \subset U^{\perp}$,
where $U^{\perp} = \{ v \in V \mid f(v,u) = 0 \, (\forall u \in U) \}$.
Note that maximal totally isotropic subspaces have dimension $d$.
The {\it dual polar graph} (on $V$)
has the maximal totally isotropic subspaces as vertices and 
two subspaces $W_1$, $W_2$ are adjacent if and only if $\dim W_1 \cap W_2 = d-1$.
We denote the graphs defined by $[B_d(q)]$, $[C_d(q)]$, $[D_d(q)]$ as
$B_d(q)$, $C_d(q)$, $D_d(q)$, respectively.
These graphs are also known as examples of distance-regular graphs and
$B_d(q)$ and $C_d(q)$ have the same intersection array.
Moreover,
they are isomorphic to each other only if $q$ is even.
Furthermore,
the extended bipartite double of $B_d(q)$ and $C_d(q)$ are again
distance-regular graphs \cite{BHe}.
That of $B_d(q)$ is $D_{d+1}(q)$ and
that of $C_d(q)$ is the {\it Hemmeter graph} with diameter $d+1$,
denoted by ${\rm Hem}_{d+1}(q)$.
And $D_{d+1}(q)$ and ${\rm Hem}_{d+1}(q)$
are not isomorphic to each other if $q$ is odd.
Here recall that the extended bipartite double of a graph $\G$ can be regard as
the product of type $[000; 110; 000]$ of $\G$ and $K_2$.
Thus,
if $B_d(q)$ and $C_d(q)$ can be mapped to each other
by switching (maybe twice or several times?),
then we simultaneously see that $D_{d+1}(q)$ and ${\rm Hem}_{d+1}(q)$
can be mapped to each other by switching.
The following diagram describes our argument here.

\renewcommand{\arraystretch}{1.5}
\[
	\begin{array}{ccc}
B_d(q) & \xrightarrow{\text{extended bipartite double}} & D_{d+1}(q) \\
\text{switching?} \downarrow \hspace{40pt}& & \hspace{40pt} \downarrow \text{switching!} \\
C_d(q) & \xrightarrow{\text{extended bipartite double}} & {\rm Hem}_{d+1}(q)
	\end{array}
\]

\section*{Acknowledgements}

The author would like to thank Akihiro Munemasa for helpful advice
and Tomonori Hashikawa for valuable comments on Subsection~5.2.

\end{document}